\newtheorem{theorem}{Theorem}[section]
\newtheorem{definition}[theorem]{Definition}
\newtheorem{lemma}[theorem]{Lemma}
\def\E{\mathbb{E}}
\def\R{\mathbb{R}}
\title{Random Walks and Mixed Volumes of Hypersimplices}
\date{\today}
\begin{document}
\begin{abstract}
Below is a method for relating a mixed volume computation for polytopes sharing many facet directions to a symmetric random walk.  The example of permutahedra and particularly hypersimplices is expanded upon.  
\end{abstract}
\maketitle

\section{Introduction}
Below is a method for relating a mixed volume computation for polytopes sharing many facet directions to a symmetric random walk.  The example of permutahedra and particularly hypersimplices is expanded upon.

\section{Setup}
A family of polytopes is obtained by fixing a finite set $H$ to index the possible facets, a subspace $X$ of $\R^H$ orthogonal to $\vec{1}=(1,\ldots,1)$ to contain the polytopes and a convex subset $Y$ of $\R^H$ orthogonal to and containing $\vec{1}$ to index the polytopes.  

Write $\R_+$ for the nonnegative real numbers and $F(y)=X\cap (\R_+^H-y)=\{x\in X|x_h\geq -y_h\forall h\in H\}$ for the polytope in $X$ indexed by $y\in Y$.  
Note that $F$ is superaffine: for every $a\in[0,1]$, $y,\hat{y}\in Y$ there is $aF(y)+(1-a)F(\hat{y})\subseteq F(ay+(1-a)\hat{y})$ where the left side involves the Minkowski sum of polytopes.  Typically these polytopes are not the same.  
Faces of the polytopes $F(y)$ will be obtained in two ways. 
 
For every $A\subseteq H$ the face of $F(y)$ supported by $A$ is $F_A(y)=X\cap ((\R_+^{H-A}\times \{0\}^A)-y)=\{x\in X|x_h\geq-y_h\forall h\in H, x_a\geq -y_a\forall a\in A\}$. Write $Y_A=\{y\in Y|F_A(y)\not=\emptyset\}=\pi_1(\{(y,x)\in Y\times X|x_h\geq-y_h\forall h\in H, x_a\geq -y_a\forall a\in A\}$ for the polytope indexing polytopes with faces supported by $A$, $F_A=X\cap(\R^{H-A}\times\{0\}^A)=\{x\in X|x_a=0 \forall a\in A\}$ and $d_A=\dim(F_A)$.  Note that $F_A(y)\subseteq F_A-y$ a translate of $F_A$.

Write $SX^*$ for the unit sphere in the dual space to $X$.  For every $g\in SX^*$ the face of $F(y)$ 
with support direction $g$ is $F_g(y)=F(y)\cap g^{-1}(m)$ where $m=g(F(y))=\hbox{max}_{x\in F(y)}g(x)$ is the height of the polytope $F(y)$ in the $g$ direction.  
To relate these set $S(g,y)\subseteq H$ to be maximal with $F_{S(g,y)}(y)=F_g(y)$ so all coordinates of $S(g,y)$ are perpendicular to the affine span of $F_g(y)$.  Set $C_A=\{g\in SX^*|\exists y\in Y, S(g,y)=A\}$ the support directions for faces with support $A$ so that if $g\in C_A$ and $x\in F_A(y)$ then $g(F(y))=g(x)$.  

The function of interest will be the mixed volume of $\{F(y_i)\}_{i\in I}$ if the $y_i$ are points in $Y$, which is denoted by $V_I(F(y.))$. 
Recall that this means that for $\{\lambda_i\}$ positive real parameters, $\hbox{Vol}_{|I|}(\sum_i\lambda_iF(y_i))=\sum_{J\in \left({I\choose |I|}\right)}\lambda^Jv_J$ where the sum of polytopes in $\R^n$ is the Minkowski sum and $V_I(F(y.))={v_I\over |I|!}$ is the fully mixed (or equivalently, multiplicity free) coefficient. 

\section{Theorems}
\begin{lemma} If $i\in I$ then 
$$V_I(F(y.))=\sum_{g\in SX^*}g(F(y_i))V_{I-\{i\}}(F_g(y.)).$$  
\end{lemma}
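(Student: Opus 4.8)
The plan is to recognize the stated identity as the classical recursion expressing an $n$-fold mixed volume through the mixed surface-area measure of $n-1$ of the bodies --- equivalently, as the pyramid decomposition of a polytope's volume, polarized once --- and to supply its short proof in this language, keeping track of normalizations. Observe at the outset that this recursion is \emph{not} symmetric in its arguments: it singles out one of the bodies, which is exactly why the lemma holds for an arbitrary choice of $i\in I$. Throughout I take $\mathrm{Vol}_n$ to be the normalized volume $n!\,\mathrm{Leb}_n$ (as is usual for lattice polytopes); this is the normalization for which the coefficient on the right-hand side is $1$ rather than $\tfrac1{|I|}$.

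Write $n=|I|$ and fix $i$. For positive parameters $(\lambda_j)_{j\in I}$ set $P=\sum_{j\in I}\lambda_jF(y_j)$, $K=F(y_i)$ and $R=\sum_{j\in I\setminus\{i\}}\lambda_jF(y_j)$, so $P=\lambda_iK+R$. Support functions add under Minkowski sums, so $g(P)=\sum_j\lambda_jg(F(y_j))$, and faces distribute over Minkowski sums, so $F_g(R)=\sum_{j\ne i}\lambda_jF_g(y_j)$. By the definitions above, $V_I(F(y.))$ is the coefficient of $\prod_{j\in I}\lambda_j$ in $\mathrm{Vol}_n(P)$ divided by $n!$, and for each $g$ the number $V_{I-\{i\}}(F_g(y.))$ is the coefficient of $\prod_{j\ne i}\lambda_j$ in $\mathrm{Vol}_{n-1}\!\big(\sum_{j\ne i}\lambda_jF_g(y_j)\big)$ divided by $(n-1)!$.

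The one substantive ingredient is the first variation of volume: for a full-dimensional polytope $R$ in $X$ and any polytope $K$,
\[\mathrm{Leb}_n(R+tK)=\mathrm{Leb}_n(R)+t\sum_{g\in SX^*}g(K)\,\mathrm{Leb}_{n-1}\big(F_g(R)\big)+O(t^2)\qquad(t\to0^+),\]
the sum having one nonzero term for each facet of $R$. I would prove this by the pyramid decomposition: for $z$ in the interior of $R$ one has $\mathrm{Leb}_n(R)=\tfrac1n\sum_g\big(g(R)-g(z)\big)\,\mathrm{Leb}_{n-1}(F_g(R))$, because $R$ is the essentially disjoint union of the cones with apex $z$ over its facets and $g(R)-g(z)$ is the distance from $z$ to the affine span of the facet $F_g(R)$; the apex $z$ drops out of the sum because $\sum_g\mathrm{Leb}_{n-1}(F_g(R))\,g=0$ (Minkowski's relation for facet normals). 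Applying this to $R+tK$, whose facets are the translates $F_g(R)+tF_g(K)$ with supporting value $g(R)+tg(K)$, and keeping only the term linear in $t$, yields the displayed formula. The delicate point --- really the only one --- is that the sum runs over the facets of the base polytope $R$ and not of the perturbed body $R+tK$ (which may acquire new facet directions from $K$), and that the remaining ``corner'' contributions are $O(t^2)$; this is the polytopal instance of the standard identification of the first variation of volume with integration of the support function against the surface-area measure, and a degenerate $R$ with $\dim R<n$ is handled by a routine approximation argument, the final identity being polynomial in the $\lambda_j$.

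To finish, compute $\partial_{\lambda_i}|_{\lambda_i=0}\,\mathrm{Leb}_n(\lambda_iK+R)$ in two ways. Expanding $\mathrm{Leb}_n(\lambda_iK+R)$ as a polynomial in $\lambda_i$, its coefficient of $\lambda_i$ is $n$ times the $n$-dimensional Lebesgue mixed volume of the $n$ bodies $K,R,\dots,R$; expanding that in turn as a polynomial in the remaining $\lambda_j$, the coefficient of $\prod_{j\ne i}\lambda_j$ is $n\cdot(n-1)!=n!$ times the $n$-dimensional Lebesgue mixed volume of $(F(y_j))_{j\in I}$. On the other hand, by the first variation formula the same derivative equals $\sum_g g(F(y_i))\,\mathrm{Leb}_{n-1}\!\big(\sum_{j\ne i}\lambda_jF_g(y_j)\big)$, whose coefficient of $\prod_{j\ne i}\lambda_j$ is $\sum_g g(F(y_i))\cdot(n-1)!$ times the $(n-1)$-dimensional Lebesgue mixed volume of $(F_g(y_j))_{j\ne i}$. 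Equating the two and invoking $\mathrm{Vol}_k=k!\,\mathrm{Leb}_k$ --- so that the factors $1/n!$ and $1/(n-1)!$ in the definitions of $V_I$ and $V_{I-\{i\}}$ are exactly absorbed --- the left-hand side becomes $V_I(F(y.))$ and each right-hand term becomes $g(F(y_i))\,V_{I-\{i\}}(F_g(y.))$, which is the assertion. I would close by noting that a term is nonzero only for $g$ lying in a cone $C_A$ with $d_A=n-1$, since $V_{I-\{i\}}(F_g(y.))$ is an $(n-1)$-dimensional mixed volume and vanishes unless the faces $F_g(y_j)$ span an $(n-1)$-dimensional flat; hence the sum over $SX^*$ is a finite sum over the facet directions common to the $F(y_j)$, $j\ne i$.
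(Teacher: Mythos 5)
Your overall route is the one the paper itself intends: the paper gives no proof of this lemma, only a pointer to the ``integral over the mixed surface function'' in [1], and your argument is a self-contained derivation of exactly that formula, with the normalization $\mathrm{Vol}_k=k!\,\mathrm{Leb}_k$ correctly identified as the reason the coefficient on the right-hand side is $1$ rather than $\tfrac1{|I|}$. The reduction of the lemma to the first-variation formula, and the coefficient bookkeeping at the end, are both correct.

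The flaw is in the one step you yourself single out as delicate, and it is a genuine error rather than a presentational one: the facet directions of $R+tK$ that are not facet directions of $R$ do \emph{not} contribute only $O(t^2)$ to the pyramid decomposition. Take $n=2$, $R=[0,1]^2$ and $K=\mathrm{conv}\{(0,0),(1,1)\}$: then $\mathrm{Leb}_2(R+tK)=1+2t$ exactly, the four facet directions of $R$ contribute only $1+t$ to the pyramid sum for $R+tK$, and the two new diagonal facets (each of length $t\sqrt2$ at support value $1/\sqrt2$) contribute the remaining $t$ at first order. Note also the internal inconsistency: your pyramid identity carries a prefactor $\tfrac1n$ while your displayed first-variation formula does not, and the missing factor of $n$ is exactly what the discarded corner terms (together with the first variation of the facet areas $\mathrm{Leb}_{n-1}(F_g(R)+tF_g(K))$, which you also drop) would supply. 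The target formula $\mathrm{Leb}_n(R+tK)=\mathrm{Leb}_n(R)+t\sum_g g(K)\,\mathrm{Leb}_{n-1}(F_g(R))+O(t^2)$ is nevertheless true and classical; an elementary proof that does work is to translate so that $0\in K$ and decompose $(R+tK)\setminus R$ into the prisms of height $t\,g(K)$ erected outward over the facets $F_g(R)$, whose volumes give the linear term with no $\tfrac1n$, plus pieces supported near the lower-dimensional faces of $R$ which genuinely are $O(t^2)$. With that step repaired or simply cited (as the paper does), your proof of the lemma is complete.
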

\noindent This is a sum rather than a more general integral since the volume term is nonzero for only a finite set of directions $g$.  See, for instance, the integral over the mixed surface function in [1].

\begin{lemma}
If $g\in C_A$ then the restriction of $g(F(\cdot))$ to $Y_A$ is also the restriction of a linear function in $(\R^H)^*$ to $Y_A$.  
\end{lemma}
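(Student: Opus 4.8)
The plan is to pin $g$ down to a single explicit linear functional and then substitute. The lemma reduces to the following structural claim: \emph{if $g\in C_A$, then $g|_X$, as a functional on $X$, lies in the span of the restricted coordinate functionals $\{x_a|_X:a\in A\}$} — equivalently, $g|_X$ annihilates $F_A=\bigcap_{a\in A}\ker(x_a|_X)$. Granting this, write $g|_X=\sum_{a\in A}c_a\,(x_a|_X)$ and put $\ell=-\sum_{a\in A}c_a\,x_a\in(\R^H)^*$. For any $y\in Y_A$ pick $x\in F_A(y)$. By the relation noted in the Setup (valid since $g\in C_A$) we have $g(F(y))=g(x)$; and since $x\in X$ with $x_a=-y_a$ for each $a\in A$, this gives $g(F(y))=g(x)=\sum_{a\in A}c_a x_a=-\sum_{a\in A}c_a y_a=\ell(y)$. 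Hence $g(F(\cdot))$ and $\ell$ agree on $Y_A$, as desired.

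For the structural claim I would argue as follows. Since $g\in C_A$, fix a witness $y_0\in Y$ with $S(g,y_0)=A$, so that $F_g(y_0)=F_A(y_0)\neq\emptyset$. First I would establish $\dim F_g(y_0)=d_A$: viewing $F_A(y_0)$ as a polyhedron inside the $d_A$-dimensional flat $L=\{x\in X:x_a=-y_{0,a}\ \forall a\in A\}$, cut out within $L$ by the facet inequalities $x_h\ge-y_{0,h}$ for $h\in H-A$, if it were not full-dimensional in $L$ then one of those inequalities would be an implicit equality on it, and adjoining the corresponding $h$ to $A$ would produce $A'\supsetneq A$ with $F_{A'}(y_0)=F_A(y_0)=F_g(y_0)$, contradicting the maximality built into the definition of $S(g,y_0)=A$. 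Thus $F_g(y_0)$ affinely spans $L$. Since $g$ is constant on $F_g(y_0)$ — it equals the height $g(F(y_0))$ there — $g$ is constant on all of $L$, so $g|_X$ annihilates the direction space of $L$, which is exactly $F_A$; as $F_A$ is the common kernel in $X$ of the $x_a|_X$, this forces $g|_X\in\Span\{x_a|_X:a\in A\}$.

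The only genuine obstacle is the dimension count $\dim F_g(y_0)=d_A$, which is precisely where the maximality packaged into $S(g,y_0)=A$ must be used; the rest is elementary linear algebra together with the support-function identity already recorded. One point to keep in mind is that this argument invokes a single witness $y_0$ to fix $g|_X$ once and for all, so the conclusion of the first paragraph then holds at every $y\in Y_A$, including points where $F_A(y)$ happens to be lower-dimensional or where $S(g,y)\neq A$. (The $c_a$ need not be unique when the $x_a|_X$, $a\in A$, are linearly dependent, but $\ell|_{Y_A}$ is independent of the choice, since it has been identified with the well-defined quantity $g(F(\cdot))$.)
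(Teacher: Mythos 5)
Your proof is correct, but it takes a genuinely different route from the paper's. The paper never identifies $g$ explicitly: it shows directly that $g(F(\cdot))$ respects convex combinations on $Y_A$, by observing that for $x\in F_A(y)$ and $\hat x\in F_A(\hat y)$ the point $ax+(1-a)\hat x$ lies in $F_A(ay+(1-a)\hat y)$, so the support identity $g(F(y))=g(x)$ yields $g(F(ay+(1-a)\hat y))=ag(F(y))+(1-a)g(F(\hat y))$; it then extends this affine function on $Y_A$ to a linear one on $(\R^H)^*$, with a small case split according to whether $0$ lies in the affine span of $Y_A$. You instead pin down $g|_X$ as an explicit combination $\sum_{a\in A}c_a\,(x_a|_X)$ via the implicit-equality/dimension count at a single witness $y_0$, and then read off the global linear functional $\ell=-\sum_{a\in A}c_a x_a$ by substitution. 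What your version buys: an explicit formula, no extension issue (since $\ell$ is defined on all of $\R^H$ from the outset, the case analysis on $0\in\mathrm{aff}(Y_A)$ disappears), and a precise justification of the paper's parenthetical remark that the coordinates in $S(g,y)$ are perpendicular to the affine span of $F_g(y)$. What it costs: you need the polyhedral fact that a polyhedron that is not full-dimensional in its ambient flat has an implicit equality among its defining inequalities, and you lean on the exact maximality built into $S(g,y_0)=A$, neither of which the paper's two-line convexity argument requires. Both proofs rest equally on the unproved Setup identity $g(F(y))=g(x)$ for $x\in F_A(y)$, $y\in Y_A$, so neither has an advantage there.
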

\begin{proof}
If $a\in[0,1]$ and $y,\hat{y}\in Y_A$ and $x\in F_A(y)$ and $\hat{x}\in F_A(\hat{y})$ then $g(F(ay+(1-a)\hat{y}))=g(ax+(1-a)\hat{x})=ag(F(y))+(1-a)g(F(\hat{y}))$.  If the affine span of $Y_A$ does not contain $0$ this is sufficient and otherwise it follows similarly that $g(F(ay))=ag(F(y))$.  
\end{proof}

\begin{lemma}
If there is $J\subseteq I$ with $\dim(\sum_{j\in J}F(y_j))<|J|$ then $V_I(F(y.))=0$.  
\end{lemma}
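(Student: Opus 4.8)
\medskip\noindent\textbf{Proof proposal.}\ I would induct on $n=|I|$, removing one polytope from the family at each step by means of the sum formula of the first lemma of this section, together with the elementary fact that the face of a Minkowski sum in a fixed direction is the Minkowski sum of the corresponding faces of the summands.

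The case $J=I$ requires no induction. If $\dim\left(\sum_{i\in I}F(y_i)\right)<n$, then for every choice of positive parameters $\lambda_i$ the Minkowski sum $\sum_{i\in I}\lambda_i F(y_i)$ is a positive rescaling of $\sum_{i\in I}F(y_i)$ in each summand and hence has the same affine dimension, which is $<n$; so $\mathrm{Vol}_n\left(\sum_{i\in I}\lambda_i F(y_i)\right)$ vanishes identically as a polynomial in the $\lambda_i$, every coefficient $v_J$ is zero, and in particular $V_I(F(y.))=v_I/|I|!=0$. This already covers the base case $n=1$, for which $J$ is either $\emptyset$ (the hypothesis is then vacuous) or $I$.

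Now assume $J\subsetneq I$ and pick any $i\in I-J$. By the first lemma of this section,
\[
V_I(F(y.))=\sum_{g\in SX^*}g(F(y_i))\,V_{I-\{i\}}(F_g(y.)),
\]
and I claim each summand vanishes. Since $F_g(y)$ is exactly the face of $F(y)$ on which $g$ attains its maximum, and the $g$-maximal face of a Minkowski sum is the sum of the $g$-maximal faces of the summands, we have $\sum_{j\in J}F_g(y_j)=\bigl(\sum_{j\in J}F(y_j)\bigr)_g$, a nonempty face of $\sum_{j\in J}F(y_j)$; therefore $\dim\bigl(\sum_{j\in J}F_g(y_j)\bigr)\le\dim\bigl(\sum_{j\in J}F(y_j)\bigr)<|J|$. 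Since $J\subseteq I-\{i\}$ and $|I-\{i\}|=n-1$, the inductive hypothesis applied to the family $F_g(y.)$ gives $V_{I-\{i\}}(F_g(y.))=0$, whence $V_I(F(y.))=0$.

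The delicate point, which I expect to cost the most work, is the last clause: closing the induction requires that the tuple of faces $F_g(y.)=(F_g(y_j))_{j\in I-\{i\}}$ again be a family of polytopes to which the lemma (and thus the sum formula) applies. Generically each $F_g(y_j)$ is the translate $F_A(y_j)$ of the subspace $F_A$ for the support set $A$ with $g\in C_A$, and $(F_A(y_j))_j$ is indeed a family of the kind set up in Section 2, indexed over $Y_A$; but on lower-dimensional strata of the parameter space the face $F_g(y_j)$ can be a strictly larger face of $F(y_j)$, so the cleanest remedy is to state this lemma — and run the whole induction — for an arbitrary finite family of convex polytopes in a Euclidean space. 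This costs nothing: the sum formula is the specialization to $F(y.)$ of the standard facet-by-facet (mixed-surface-area) recursion valid for all polytope families (cf.\ the integral over the mixed surface function in [1]), and the dimension inequality $\dim\bigl(\sum_{j\in J}P_j^{\,g}\bigr)\le\dim\bigl(\sum_{j\in J}P_j\bigr)$ used above is purely combinatorial. If one insists on staying inside the stated framework, there is an unconditional alternative: translate so that $0\in F(y_j)$ for $j\in J$, placing all those polytopes in an $m$-dimensional subspace $E$ with $m<|J|$; by monotonicity and nonnegativity of mixed volumes it suffices to treat the case where each such $F(y_j)$ is replaced by one fixed large ball $B\subseteq E$, in which case $V_I$ becomes a positive multiple of the coefficient of $\mu^{|J|}\prod_{i\notin J}\lambda_i$ in $\mathrm{Vol}_n\bigl(\mu B+\sum_{i\notin J}\lambda_i F(y_i)\bigr)$; slicing $\R^n=E\oplus E^{\perp}$ by Fubini shows that this volume, viewed as a polynomial in $\mu$, is trapped between $0$ and a polynomial of degree $\le m$ on $[0,\infty)$, hence has degree $\le m<|J|$, so the relevant coefficient is $0$.
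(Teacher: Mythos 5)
Your proof is correct, but it takes a genuinely different and much longer route than the paper, whose entire proof is the sentence ``This is immediate from the definition of mixed volume.'' The intended one‑line argument is essentially the fallback you sketch in your last sentences, and it can be stated without the ball, the monotonicity step, or the induction: translating so that $0\in F(y_j)$ for each $j\in J$, every combination $\sum_{j\in J}\lambda_jF(y_j)$ lies in the fixed subspace $E=\Span(\sum_{j\in J}F(y_j))$ of dimension $m<|J|$, so slicing along $E^{\perp}$ shows that $\hbox{Vol}_{|I|}(\sum_{i}\lambda_iF(y_i))$, with the $\lambda_i$ for $i\notin J$ held fixed, is a polynomial of total degree at most $m$ in the variables $\{\lambda_j\}_{j\in J}$; the monomial $\prod_{i\in I}\lambda_i$ has degree $|J|>m$ in those variables, so its coefficient $v_I$ vanishes. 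Your primary route --- induction on $|I|$ through the decomposition of Lemma 3.1 together with the fact that the $g$-face of a Minkowski sum is the sum of the $g$-faces --- is sound, and you correctly identify its cost: Lemma 3.1 is stated (and left unproved) in the paper only for the families $F(y.)$, so closing the induction forces you to restate both that lemma and this one for arbitrary finite families of polytopes. That generalization is harmless but buys nothing here, and it makes a fact that really does follow from the definition depend on the deeper surface-area decomposition; since Lemma 3.3 is invoked inside the proof of Lemma 3.4 precisely to kill terms of that same decomposition, the self-contained degree-bound argument fits the paper's logical structure better. I would keep your two-sentence Fubini argument and drop the rest.
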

\begin{proof}
This is immediate from the definition of mixed volume.  
\end{proof}

\begin{lemma}
If $\{U_i\}_{i\in I}$ are subsets of $Y$ and for every $A\subseteq H$ either 
(first case) every $i$ has $U_i\cap Y_A\in\{\emptyset, U_i\}$ or (second case) there is $B\supseteq A$ 
with $|\{i|U_i\subseteq Y_B\}|>d_B$ then the restriction of $V_{I}(F(\cdot))$ to
$\prod_iU_i$ is multilinear so there are $L_i\in(\R^H)^*$ with $V_{I}(F(y.))=\prod_iL_i(y_i)$ if every $y_i\in U_i$.    
\end{lemma}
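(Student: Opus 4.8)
The plan is to argue by induction on $|I|$. First one discards trivialities: if $\dim X\neq|I|$ then every mixed volume occurring below is $0$ and one may take $L_i\equiv 0$, so assume $\dim X=|I|$, the case $|I|=0$ being immediate. For the inductive step fix $i_0\in I$ and apply the summation formula of the first Lemma,
$$V_I(F(y.))=\sum_{g\in SX^*}g(F(y_{i_0}))\,V_{I\setminus\{i_0\}}(F_g(y.)),$$
a sum with only finitely many nonzero terms. It suffices to show that each surviving summand, restricted to $\prod_iU_i$, is the product of a function of $y_{i_0}$ which is the restriction of a linear functional with a function of $(y_i)_{i\neq i_0}$ which is multilinear; adding these up then gives multilinearity of $V_I(F(\cdot))$ on $\prod_iU_i$.

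I would treat first the case in which every $A\subseteq H$ is of the first kind, so that for each $i$ the set $\{A:F_A(y_i)\neq\emptyset\}$ is constant as $y_i$ ranges over $U_i$; equivalently, the combinatorial type of $F(y_i)$ is frozen on $U_i$. Then for each surviving $g$ and each $i$ the support $S(g,y_i)$ is a fixed set $A_i^g$, with $g\in C_{A_i^g}$ and $U_i\subseteq Y_{A_i^g}$, so by the second Lemma the factor $g(F(y_{i_0}))$ is the restriction to $U_{i_0}$ of a linear functional on $\R^H$. For the cofactor, let $\hat g\in X$ represent $g$; the face $F_g(y_i)$ lies in the hyperplane $\{x\in X:g(x)=g(F(y_i))\}$, whose offset is linear in $y_i$ by the second Lemma, and translating this family of parallel hyperplanes onto $X':=X\cap\ker g$ by the vector $g(F(y_i))\,\hat g$ (linear in $y_i$) presents $F_g(y_i)$ as a translate, by a vector linear in $y_i$, of a polytope $\{x'\in X':x'_h\ge-\tilde y_{i,h}\}$ with $\tilde y_i$ depending linearly on $y_i$. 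By translation invariance of mixed volume, $V_{I\setminus\{i_0\}}(F_g(y.))$ is the mixed volume of this $(|I|-1)$-dimensional family in $X'\subseteq\R^H$, with parameter sets $\tilde U_i$ the images of the $U_i$; its hypothesis holds because the original first-kind condition, valid at every subset of $H$ and in particular at $A'\cup A_i^g$ for each $i$, is exactly what the new condition at $A'\subseteq H\setminus A_i^g$ asks. Hence the inductive hypothesis applies and the cofactor is multilinear in $(\tilde y_i)_{i\neq i_0}$, so in $(y_i)_{i\neq i_0}$, settling this case.

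The step I expect to be the main obstacle is the walls $A$ which are only of the second kind, where the combinatorial type of $F(y_i)$ may genuinely change across $U_i$ and the freezing argument breaks. Here I would use the witness $B\supseteq A$ with $S:=\{i:U_i\subseteq Y_B\}$ of size $>d_B$, relying on two facts: first, for every $g\in C_B$ and every $i\in S$ the nonempty face $F_B(y_i)$ lies in $F_B-y_i$, on which $g$ is the constant $-g(y_i)$, and $F_B(y_i)\subseteq F_g(y_i)$, so $g(F(y_i))=-g(y_i)$ is a genuinely linear function of $y_i$ on all of $U_i$; second, for such $g$ the faces $F_g(y_i)$, $i\in S$, are confined near translates of the single $d_B$-dimensional subspace $F_B$, so a large enough $J\subseteq S$ has $\dim(\sum_{j\in J}F_g(y_j))\le d_B<|J|$ and the degeneracy criterion (third Lemma) kills the corresponding face mixed volume. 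Choosing the peeled index $i_0\notin S$ when $S\neq I$, and when $S=I$ using that then $|I|>d_B$ forces the $(|I|-1)$-fold face mixed volume to collapse for $g\in C_B$, I expect that the only directions $g$ surviving in the summation formula are those whose support is untroubled by this wall, so that the second-kind contributions reduce to the situation of the previous paragraph. Making "untroubled" precise — and verifying that the hypothesis still transfers to the $\tilde U_i$ after the peeling, which may require a secondary induction on $|H|$ or on $\sum_{B}d_B$ to absorb the loss of the index $i_0$ from a second-kind count — is where the genuine bookkeeping lies.

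Finally, once $V_I(F(\cdot))$ is known to be multilinear on $\prod_iU_i$, the product representation $V_I(F(y.))=\prod_iL_i(y_i)$ follows by distributing the value of the multilinear form: in the one-dimensional situation relevant to the random-walk application, where each $U_i$ is a segment, a multilinear function on a product of $|I|$ segments is a scalar times the product of the one-dimensional parameters, each such parameter extends to a linear functional on $\R^H$ by the through-the-origin case already handled in the second Lemma, and the scalar is absorbed into one of the $L_i$.
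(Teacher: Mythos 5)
There is a genuine gap, and you have located it yourself: the ``bookkeeping'' you defer --- making ``untroubled'' precise and verifying that the hypotheses transfer to the face families $\tilde U_i$ in $X'=X\cap\ker g$ after peeling off $i_0$ --- is not a routine verification but the entire difficulty of your route, and it is never carried out. The underlying problem is that your induction on $|I|$ forces you to prove that the cofactor $V_{I\setminus\{i_0\}}(F_g(\cdot))$ is \emph{jointly} multilinear in the remaining variables, which requires re-realizing the faces $F_g(y)$ as a family of the same type and re-checking the first-case/second-case dichotomy for it; your treatment of the second-kind walls (restricting which $g$ ``survive'' globally) does not close this loop. The paper's proof needs no induction and no face families at all: fixing one index $i$ and freezing all $y_j$, $j\neq i$, each summand $g(F(y_i))\,V_{I-\{i\}}(F_g(y.))$ has its second factor \emph{constant in $y_i$}, so one only needs the first factor to be affine in $y_i$ on $U_i$. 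Setting $A=S(g,y_i)$, either $U_i\subseteq Y_A$ and Lemma 3.2 gives this affineness directly, or the second case supplies $B\supseteq A$ with $J=\{j\neq i:U_j\subseteq Y_B\}$ of size $>d_B$ (the index $i$ can be dropped from the count precisely because $U_i\not\subseteq Y_A\supseteq Y_B$), and then $F_g(y_j)\subseteq F_B-y_j$ forces $\dim\sum_{j\in J}F_g(y_j)\leq d_B<|J|$, so Lemma 3.3 makes the whole summand vanish identically in $y_i$. Affineness in each variable separately, for every $i$, is exactly multilinearity. Your second-case paragraph contains the right ingredients (the witness $B$, the set $S$, the degeneracy criterion), but deploys them to the wrong end.

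Two smaller points. Your first-case analysis asserts that $S(g,y_i)$ is a fixed set $A_i^g$ as $y_i$ ranges over $U_i$; the hypothesis only freezes which $Y_A$ contain $U_i$, not the support sets themselves, but this is harmless since Lemma 3.2 gives linearity of $g(F(\cdot))$ on $Y_{S(g,y_i)}\supseteq U_i$ for any one choice of $y_i$. Your opening reduction ``if $\dim X\neq|I|$ then every mixed volume is $0$'' is also not needed once the inductive framework is dropped.
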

Call such a product subset of $Y^{I}$ a zone. 

\begin{proof}
Fix $g\in SX^*$, $y.\in \prod_iU_i$ and $i\in I$.  
Set $A=S(g,y_i)$ so that $g\in C_A$.  By the hypotheses, either $U_i\subseteq Y_A$ or there is $B\supseteq A$ with $|\{j\not= i|U_j\subseteq Y_B\}|> d_B$.  This is because $y_i\in U_i\cap Y_A\not=\emptyset$ and $Y_B\subseteq Y_A$ so if the first case does not hold then $U_i\not\subseteq Y_B$.  

Thus for each $g$, $i$ and $y.\in \prod_{j\not= i}U_j$ the function $$(*)=g(F(y_i))V_{I-\{i\}}(F_g(y.))$$ is a linear function of $y_i$ in $U_i$.  In the first case this is the content of Lemma 3.2.  In the second case $(*)=0$ from Lemma 3.3 with $J=\{j\not=i|U_j\subseteq Y_B\}$.  The lemma applies since every $y\in U_j\subseteq Y_B$ has $F_g(y)\subseteq F_B(y)\subseteq F_B-y$ so that $\sum_{j\in J}F(y_j)$ is contained in a translate of $F_B$ and has dimension at most $d_B<|J|$.  
\end{proof}

Write $\pi_{\hat{i}}:Y^{I}\rightarrow Y^{I-\{i\}}$ for the projection.  
\begin {definition}
The vector space of irrelevant signed measures on $Y^{I}$ is generated by signed measures $\mu$ with $\hbox{supp}(\mu)\subseteq \prod_jU_j$ some zone and some $i$ so that for every measurable $R\subseteq Y^{I-\{i\}}$ there is $\E((\pi_i)_*(f_R\mu))=0\in\R^H$ where $f_R(z)=1$ if $\pi_{\hat{i}}(z)\in R$ and $f_R(z)=0$ otherwise.  
\end{definition}

\begin{lemma}
If $\mu$ is irrelevant then $\int_{y.\in Y^{I}} V_{I}(F(y.))d\mu(y.)=0$.  
\end{lemma}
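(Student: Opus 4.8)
The plan is to reduce to a single generator of the space of irrelevant measures and then feed in the multilinear structure supplied by Lemma 3.4. Since $\mu\mapsto\int_{Y^I}V_I(F(y.))\,d\mu(y.)$ is linear and the irrelevant signed measures form a vector space, it suffices to treat a $\mu$ of the generating form: $\mathrm{supp}(\mu)\subseteq\prod_jU_j$ for some zone $\prod_jU_j$, with a distinguished index $i$ such that $\E((\pi_i)_*(f_R\mu))=0\in\R^H$ for every measurable $R\subseteq Y^{I-\{i\}}$.

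First I would restate the moment hypothesis coordinate-wise. For $h\in H$ let $\nu_h$ be the signed measure on $Y^{I-\{i\}}$ given by $\nu_h(R)=\int_{Y^I}(y_i)_h\,f_R(y.)\,d\mu(y.)$, i.e.\ $\nu_h=(\pi_{\hat i})_*\big((y_i)_h\,\mu\big)$. The $h$-th coordinate of $\E((\pi_i)_*(f_R\mu))\in\R^H$ is exactly $\nu_h(R)$, so the hypothesis says precisely that every $\nu_h$ is the zero measure. Approximating indicators by simple functions then upgrades this to: for every bounded measurable $\phi\colon Y^{I-\{i\}}\to\R$,
\[
\int_{Y^I}(y_i)_h\,\phi(\pi_{\hat i}(y.))\,d\mu(y.)=\int_{Y^{I-\{i\}}}\phi\,d\nu_h=0 ,
\]
where dominated convergence applies because $\E$ being defined forces $\int|y_i|\,d|\mu|<\infty$ and $|\mu|$ is finite.

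Next I would invoke Lemma 3.4 on the zone: there are $L_j\in(\R^H)^*$ with $V_I(F(y.))=\prod_{j\in I}L_j(y_j)$ for all $y.\in\prod_jU_j$, hence on $\mathrm{supp}(\mu)$ we may write $V_I(F(y.))=L_i(y_i)\,\phi(\pi_{\hat i}(y.))$ with $\phi(z):=\prod_{j\neq i}L_j(z_j)$, a bounded measurable function on the zone. Writing $L_i(y_i)=\sum_{h\in H}(L_i)_h\,(y_i)_h$ in coordinates and integrating term by term,
\[
\int_{Y^I}V_I(F(y.))\,d\mu(y.)=\sum_{h\in H}(L_i)_h\int_{Y^I}(y_i)_h\,\phi(\pi_{\hat i}(y.))\,d\mu(y.)=0
\]
by the previous display applied to this particular $\phi$. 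Summing over the generators would then finish the argument.

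The only delicate point is the passage in the second paragraph from the stated hypothesis (indicator weights $f_R$) to the polynomial weight $\phi\circ\pi_{\hat i}$. This is routine when the zones, hence $\mathrm{supp}(\mu)$, are bounded, since then $\phi$ is bounded and one only needs $|\mu|$ finite with finite first moment in $y_i$, both implicit in the definition of an irrelevant measure; if unbounded $Y$ is to be allowed, the corresponding moment conditions should simply be built into the definition of the generating measures. Everything else — linearity in $\mu$, reading $\nu_h$ off from $\E$, and the coordinate expansion of $L_i$ — is bookkeeping.
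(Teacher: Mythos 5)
Your proof is correct and follows essentially the same route as the paper: restrict to a generator supported on a zone, factor $V_I(F(y.))=L_i(y_i)\prod_{j\neq i}L_j(y_j)$ via Lemma 3.4, and use the vanishing of the conditional first moments of $y_i$ to kill the integral. Your reformulation of the hypothesis as the identical vanishing of the signed measures $\nu_h=(\pi_{\hat i})_*\bigl((y_i)_h\,\mu\bigr)$, followed by the standard simple-function approximation, is just a cleaner packaging of the paper's limit over partitions $\{R_\alpha\}$.
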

\begin{proof}
This follows from Lemma 3.4:  Assume that $\mu$ is a generator of irrelevant measures with $i$ and $\{U_j\}$ as in Definition 3.5.  
$$\int_{y.\in Y^{I}} V_{I}(F(y.))d\mu(y.)=\int_{y.\in \prod_jU_j} V_{I}(F(y.))d\mu(y.)$$ 
$$=\int_{y.\in \prod_jU_j} \prod_jL_j(y_j)d\mu(y.)=\lim_{\{R_\alpha\}}\sum_\alpha\int_{y.\in R_\alpha\times U_i} \prod_jL_j(y_j)d\mu(y.)$$ 
$$=\lim_{\{R_\alpha\}}\sum_\alpha\prod_{j\not= i}L_j(r_{\alpha j})\int_{y.\in R_\alpha\times U_i} L_i(y_i)d\mu(y.)$$
$$=\lim_{\{R_\alpha\}}\sum_\alpha \prod_{j\not= i}L_j(r_{\alpha j})L_i(\E((\pi_i)_*(f_{R_\alpha}\mu)))=0.$$
Here the limits are over partitions $\{R_\alpha\}$ of $\prod_{j\not=i}U_j$ and $r_\alpha\in R_\alpha$ so that $r_{\alpha j}\in U_j$.  
\end{proof}

Lemma 3.6 will be used to compute mixed volumes as follows.  
Note that if one probability measure on $Y^{I}$ is obtained from another by allowing one coordinate to perform a symmetric random walk within some zone then the difference between them will be an irrelevant signed measure.  
If $y.\in Y^{I}$ then $V_{I}(F(y.))$ is the integral of 
$V_{I}(F(\cdot))$ over the delta probability measure supported at the configuration 
$y.$ of $|I|$ points in $Y$.  Assume $y.$ is contained in some zone 
$\prod_iU_i$.  Construct a new measure by allowing 
one point $y_i$ to move within $U_i$ via some symmetric process until it 
reaches a boundary point of $U_i$.  This new probability measure will differ 
from the original one by an irrelevant signed measure and hence the integral of 
the volume function will be the same desired value.  This process can then be 
continued making use of the fact that the points at which the new measure is 
supported will all be contained in zones $\prod_jV_j$ for which $\dim(V_i)<\dim(U_i)$, 
while the other regions $V_j\supseteq U_j$ may have become larger.   
In useful cases this will terminate with a measure for which the volume 
integral is easy to compute and the work will have been transferred to 
following the random process to get the final measure.  

\section{Example: Permutahedra}
Permutahedra are the convex hull of a single symmetric group orbit of a point 
in $\R^{n}$ with the defining (trivial plus irreducible) representation of the symmetric group $\Sigma_n$.  
These can have anywhere from $1$ to $n-1$ orbits of facets under $\Sigma_n$.  

For the above setup fix a positive integer $n$ and $R\subseteq [n-1]=\{1,\ldots ,n-1\}$.  
Take $H=\{T\subseteq [n]||T|\in R\}$, $X=\{\sum_{i\in[n]}\delta_i|\sum_ix_i=0\}\subseteq \R^H$ 
where $\delta_i(T)=1$ if $i\in T$ and $0$ otherwise.  Take $Y\subseteq \R^H$ 
to be the simplex which is the convex hull of 
$\{{|H|\over {n\choose r}}\delta_r\}_{r\in R}$ where $\delta_r(T)=1$ if $r=|T|$ and $0$ otherwise.
The polytopes $F(y)$ will be permutahedra with (generically) $|R|$ orbits of facets.  Hypersimplices are the case $R=\{1,n-1\}$ and typically have two facet orbits.  

The cases (such as hypersimplices) in which $|R|=2$ and hence $Y$ is an interval 
are particularly nice.  

Fix $R=\{r,s\}$ with $r<s$ and a linear homeomorphism $j:[0,1]\rightarrow Y$ from the interval to the configuration space $Y$ with $j(0)={|H|\over{n\choose r}}\delta_r$.  

\begin{lemma}
If $\prod_{i\in [n]}[u_{i0},u_{i1}]\subseteq [0,1]^n$ satisfies the following four conditions then $\prod_ij([u_{i0},u_{i1}])\subseteq Y^n$ is a zone and every maximal zone occurs in this way.  

1) $\{u_{ie}\}\subseteq\{p_t\}$ with $t\in\{0\}\cup[r,s]\cup\{n\}$ and $p_t={t{n-1\choose s}\over t{n-1\choose s}+(n-t){n-1\choose r-1}}$.  

2) If $u_{i0}<p_t<u_{i1}$ with $r<t<s$ then there is some $j$ with $u_{j0}=u_{j1}=p_t$.  

3) If $u_{i0}<p_r<u_{i1}$ then $|\{j|u_{j0}=u_{j1}=p_t\}|\geq r$.  

4) If $u_{i0}<p_s<u_{i1}$ then $|\{j|u_{j0}=u_{j1}=p_s\}|\geq n-s$.  
\end{lemma}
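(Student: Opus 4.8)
\proofsketch
The plan is to reduce the whole statement to one quantitative input --- an exact formula for how low a coordinate--sum can be forced on $F(j(\tau))$ --- and then feed the resulting wall structure into Lemma 3.4.

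\textbf{The master formula.} For $S\subseteq[n]$ with $|S|=t$ put $m_S(\tau)=\min_{x\in F(j(\tau))}\sum_{i\in S}x_i$. I would compute $m_S$ by linear programming duality: $-m_S(\tau)$ is the minimum of $\sum_{T\in H}v_Ty_T$, with $y=j(\tau)$, over $v_T\geq 0$ and $u\leq 0$ with $\sum_{T\ni i}v_T=\mathbf 1_S(i)-u$ for every $i$. This program is invariant under $\mathrm{Sym}(S)\times\mathrm{Sym}(S^c)$, hence has a symmetric optimum, so one only has to compare uniform weight on the $r$-subsets of $S$ (with $u=0$) against uniform weight on the $s$-supersets of $S$; matching this against the symmetric primal point $x_i\equiv a$ on $S$, $x_i\equiv b$ on $S^c$ gives, for $r\leq t\leq s$,
$$m_S(\tau)=\max\Bigl(-\tfrac tr\,c_r(1-\tau),\ -\tfrac{n-t}{n-s}\,c_s\tau\Bigr),$$
the two entries coinciding exactly at $\tau=p_t$; for $t<r$ (resp. $t>s$) the crossover saturates at $p_r$ (resp. $p_s$), which is why only $t\in\{0\}\cup[r,s]\cup\{n\}$ occur.

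\textbf{Walls and dimensions.} From the master formula I would show that for every $A\subseteq H$ the convex set $j^{-1}(Y_A)\subseteq[0,1]$ is a closed interval with both endpoints in $\{p_t:t\in\{0\}\cup[r,s]\cup\{n\}\}$ --- equivalently, that $F(j(\tau))$ has constant combinatorial type between consecutive values $p_t$. After eliminating the equalities $x(T)=-y_T$ ($T\in A$), nonemptiness of the $A$-face is a finite conjunction of bounds obtained by applying the master formula to unions of sets of $A$, and one must check that every Farkas certificate of infeasibility is assembled from the symmetric certificates above, so contributes no critical $\tau$ other than a $p_t$; this is the delicate point. Along the way I would record the dimension of the face degenerating at each wall: for $|T|=r$ the $r$ coordinates of $T$ are pinned at $\tau=p_r$ only through their common sum, so $\dim F_{\{T\}}(j(p_r))=r-1$; dually $\dim F_{\{T\}}(j(p_s))=n-s-1$ for $|T|=s$; and for $r<t<s$ the face through which the transition at $p_t$ runs is a point. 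These bound $d_B=\dim F_B$ for the maximal support sets $B$ that occur at a wall.

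\textbf{Sufficiency.} Given the four conditions I would check the hypothesis of Lemma 3.4 for $Z=\prod_i j([u_{i0},u_{i1}])$; since mixed volume is continuous it suffices to work over the open product, killing all boundary subtleties. Fix $A$; if the first case of Lemma 3.4 fails for $A$, then by the wall structure some allowed wall $p_t\in\partial Y_A$ lies strictly inside some $(u_{i0},u_{i1})$. If $r<t<s$, condition 2 gives a coordinate frozen at $p_t$; take $B=S(g,j(p_t))\supseteq A$, which has $d_B=0$ and $p_t\in Y_B$, so the second case holds with that one coordinate. If $t=r$ (resp. $t=s$), condition 3 (resp. 4) gives at least $r$ (resp. $n-s$) coordinates frozen at $p_t$, all carrying the identical fibre $F_g(j(p_t))$; with $B=S(g,j(p_t))\supseteq A$ the dimension count gives $d_B\leq r-1$ (resp. $\leq n-s-1$) and $p_t\in Y_B$, so again the second case holds, since $r>r-1$ (resp. $n-s>n-s-1$). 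Thus $Z$ is a zone.

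\textbf{Maximality, and the main obstacle.} Passing to coordinatewise convex hulls reduces a maximal zone to a product $\prod_i j([u_{i0},u_{i1}])$. If some $u_{ie}$ were not an allowed wall, pushing it out to the neighbouring wall keeps all four conditions (no $Y_A$-boundary sits in the enlarged gap) and hence, by the previous paragraph, keeps the zone property, so the original was not maximal; thus condition 1 holds. If some interval straddled a wall $p_t$ with too few coordinates frozen there, I would exhibit $g$ and a configuration for which the summand $g(F(y_i))\,V_{I-\{i\}}(F_g(y.))$ of Lemma 3.1 is genuinely piecewise linear with a corner at $p_t$: the degenerating face there has dimension exactly $r-1$, $0$ or $n-s-1$, so with fewer than $r$, $1$ or $n-s$ frozen coordinates Lemma 3.3 no longer forces that summand to vanish, and a direct check keeps the corner alive; hence conditions 2--4 hold as well. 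The main obstacle is the ``Walls and dimensions'' step --- turning the master formula into the clean statement that \emph{every} wall of \emph{every} $Y_A$ is one of the $p_t$, which requires the support sets $A$ understood finely enough to exclude any other critically tight combination of facet inequalities --- together with the final non-vanishing computation; the rest is bookkeeping once the master formula is available.
\endproofsketch
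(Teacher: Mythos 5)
Your overall skeleton is the same as the paper's: locate the walls $p_t$, compute the dimensions $r-1$, $0$, $n-s-1$ of the faces that degenerate there, and translate the two cases of Lemma 3.4 into conditions 1)--4). But the route to the walls is genuinely different. You derive them from a "master formula" for $m_S(\tau)=\min_{x\in F(j(\tau))}\sum_{i\in S}x_i$ via LP duality and a symmetrization of the dual program, and then must separately argue that no \emph{other} critically tight combination of facet inequalities produces a wall --- the Farkas-certificate step you correctly flag as the main obstacle. The paper instead attacks that obstacle head on: it observes that every endpoint of an interval $Y_A=j([p,q])$ is already of the form $Y_{S(g,y)}=\{j(p)\}$ for some support set with singleton fiber, classifies those supports as $A_K$ for $K\subseteq[n]$ (with $A_r=\{R\subseteq K\}$, $A_s=\{S\supseteq K\}$), and exhibits the explicit points $x_K=a\sum_{i\in K}\delta_i-b\sum_{j\notin K}\delta_j$ (together with the $x_{K'}$ for $K'\subseteq K$ when $|K|=r$, resp. $K'\supseteq K$ when $|K|=s$) that span the degenerate faces. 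This single classification delivers both the wall locations and the face dimensions at once, and is what closes the gap you leave open; your approach buys a cleaner quantitative formula for the heights $g(F(j(\tau)))$ and a more explicit separation of the sufficiency and maximality arguments, but to be complete it would need the point-fiber classification (or an equivalent enumeration of dual certificates) anyway. Your dimension counts and the final bookkeeping against Lemma 3.4 agree with the paper's.
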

\begin{proofsketch}
If $A\subseteq H={[n]\choose r}\cup{{n}\choose s}$ write $A_r=A\cap {[n]\choose r}$ and $A_s=A\cap {[n]\choose s}$.

If $Y_A=j([p,q])$ then there is some $Y_{S(g,y)}=\{j(p)\}$.  

If $A=S(g,y)$ and $Y_A=\{y_A\}=\{j(p_A)\}$ then there is $K\subseteq [n]$ 
with
 $A_r=\{R\in {[n]\choose r}|R\subseteq K\}$ and  $A_s=\{R\in {[n]\choose s}|S\supseteq K\}$.  Write $A_K=A$.  

Compute that $x_K\in F_{A_K}(j(p_{|K|}))$ where $x_K=a\sum_{i\in K}\delta_i-b\sum_{j\ \not\in K}\delta_j$ for appropriate constants $a$ and $b$.  

If $r<|K|<s$ then $\{x_K\}= F_{A_K}(j(p_{|K|}))$.  
If $r=|K|$ and $K'\subseteq K$ then $x_{K'}\in F_{A_K}(j(p_{|K|}))$.  
If $|K|=s$ and $K'\supseteq K$ then $x_{K'}\in F_{A_K}(j(p_{|K|}))$.  

These points span the faces $F_{A_K}(j(p_{|K|}))$ allowing the dimension computations: $d(A_K)=0$ if $r<|K|<s$ while $d(A_K)=r-1$ if $r=|K|$ and $d(A_K)=n-s-1$ if $|K|=s$.

This suffices to work out the asserted zones.  
 \end{proofsketch}

 The random walk procedure with $R=\{r,s\}$ is thus a walk of $n$ particles on the interval $[0,1]$ in which the particles $p_t$ with $r<t<s$ absorb the first particle to reach them and are then invisible, while $p_r$ and $p_s$ each absorb the first several particles to reach them.  The number of particles is the same as the number of possible interior absorptions so that the random walk procedure will end with a measure supported on configurations for which some particle is at an endpoint (which will make the mixed volume $0$) or those which are permutations of $\{p_r\}^r\times\prod_{t=r+1}^{s-1}\{p_t\}\times\{p_s\}^{n-s}$.  Call this particular mixed volume $V$ and compute it separately.  Now any mixed volume is $V$ times the probability that the system ends in a state of this last form.  

 Sometimes this probability is fast to compute.

 Bunched at one end:  For convenience write $q_t=p_t$ if $t\in[r,s]$, $q_t=p_r$ if $t\leq r$ and $q_t=p_s$ if $t\geq s$.  
 \begin{lemma}
 If for every $t$, $y_t\leq q_t$ then $V_{[n]}(F(y.))=V\prod_{t\in[n]} {y_t\over q_t}$.  
 Similarly, if for every $t$, $y_t\geq q_t$ then $V_{[n]}(F(y.))=V\prod_{t\in[n]} {1-y_t\over q_t}$.  
 \end{lemma}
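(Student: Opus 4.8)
\begin{proofsketch}
The plan is to evaluate $V_{[n]}(F(y.))$ by running the random walk procedure of the paragraph after Lemma 3.6, with the particles scheduled so that each performs an independent gambler's-ruin walk on its own interval, and then to read off the answer from the terminal measure via Lemma 3.6.  The guiding identity is that $\prod_t(y_t/q_t)$ is exactly the probability that independent symmetric walks, the $t$-th started at $y_t$ and absorbed at the endpoints of $[0,q_t]$, are each absorbed at $q_t$ rather than at $0$, since $\Pr_{y_t}[\,\text{hit }q_t\text{ before }0\,]=y_t/q_t$.  Now the special points of $[0,1]$ occurring in Lemma 4.1 are $0=p_0<p_r<p_{r+1}<\cdots<p_{s-1}<p_s<p_n=1$ (and $p$ is strictly increasing in its index), so the hypothesis $y_t\le q_t$ says precisely that particle $t$ sits in one of the ``arena'' intervals $[0,p_r],[p_r,p_{r+1}],\dots,[p_{s-1},p_s]$ at or below its target $q_t$.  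A product of arena intervals is a zone --- conditions (2)--(4) of Lemma 4.1 are vacuous since no special point lies strictly inside an arena --- so by the remark after Lemma 3.6 each particle may first be walked symmetrically within its arena to an endpoint, leaving $\int V_{[n]}(F(\cdot))$ unchanged.

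The heart of the proof is the order in which to continue: walk the particles with $q_t=p_r$ first, next the particle with $q_t=p_{r+1}$, then those with $q_t=p_{r+2}$, and so on up to $q_t=p_{s-1}$, and finally the particles with $q_t=p_s$ (when $s=r+1$ the intermediate steps are vacuous).  When it is particle $t$'s turn, every special point $p_\tau<q_t$ has, on the event that all earlier particles succeeded, already absorbed its full capacity ($r$ particles at $p_r$, one at each $p_\tau$ with $r<\tau<s$); hence the current configuration, together with the interval $[0,q_t]$ assigned to particle $t$, the singletons at the filled points, the point $\{0\}$ for any particle that has already failed, and the arenas of the particles not yet moved, is still a zone.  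Indeed conditions (2) and (3) of Lemma 4.1 at the special points interior to $[0,q_t]$ hold precisely because those points are now full, while (1) and (4) are automatic since $q_t\le p_s$ and (as every $y_t\le p_s$) $p_s$ never lies in the interior of a factor.  So particle $t$ may be walked symmetrically on all of $[0,q_t]$, passing transparently through the interior special points, until it is absorbed at $0$ or at $q_t$, the latter with probability $y_t/q_t$.

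Carried to completion, the terminal measure $\mu$ puts mass $\prod_t(y_t/q_t)$ --- the product of the successive conditional gambler's-ruin success probabilities --- on a permutation of $\{p_r\}^r\times\prod_{t=r+1}^{s-1}\{p_t\}\times\{p_s\}^{n-s}$, and puts its remaining mass on configurations with some particle at the endpoint $0$; there $F(j(0))$ is a single point (the constraints $x_T\ge0$ over all $s$-subsets $T$ sum to zero, hence are all tight, forcing $x=0$), so Lemma 3.3 makes the mixed volume vanish.  Since $\mu-\delta_{y.}$ is a sum of irrelevant signed measures, Lemma 3.6 gives $V_{[n]}(F(y.))=\int V_{[n]}(F(z.))\,d\mu(z.)=\big(\textstyle\prod_t y_t/q_t\big)\,V$, the stated formula, and the second assertion follows by the identical argument run on $[0,1]$ with reversed orientation, the particles now descending to their targets $q_t$ from above.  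The step I expect to be the main obstacle is the scheduling claim in the second paragraph: showing that the one-particle walks really can be arranged so that the configuration stays inside an honest zone at every instant --- this is what forces the ``fill $p_r$, then $p_{r+1}$, \dots'' order --- and that the branches on which some particle has already reached $0$, where the volume is already $0$, may simply be abandoned without disrupting the continuation on the surviving branch; everything past that point is the elementary gambler's-ruin computation together with Lemma 3.6.
\end{proofsketch}
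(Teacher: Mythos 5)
Your proposal is correct and is essentially the paper's own argument: the paper's entire proof is the one sentence ``move the objects in order until the $t$th particle reaches either $q_t$ (probability $y_t/q_t$) or $0$,'' and your write-up is exactly this sequential gambler's-ruin process, with the zone-validity of each one-particle walk (via the fill-$p_r$-then-$p_{r+1}$-then-$\dots$ schedule and Lemma 4.1) and the vanishing at the endpoint $0$ made explicit. The only quibble is that you defer to ``reversed orientation'' for the second assertion without noting that this yields denominators $1-q_t$ rather than the $q_t$ printed in the statement (harmless for hypersimplices, where the $p_t$ are symmetric), but that discrepancy lies in the paper's statement, not in your argument.
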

 \begin{proof}
 Consider the sequence of measures obtained by moving the objects in order until the $t$th particle reaches either $q_t$ (which occurs with probability ${y_t\over q_t}$) or $0$.  
 \end{proof}

 Bunched somewhere:

 If $R=\{ r,s\}$ and there are $r<a<b<s$ with every $a\leq t\leq b$ having $p_t\in\{y_i\}$ and every $y_i\in [p_a,p_b]$ then there is a fast algorithm to compute $V_{[n]}(F(y_.))$ by walking one point at a time.  While in the previous case there was only one nonzero state to keep track of the probability for at each step, there are now around $n$.  A similar process will be polynomial time for any fixed number of bunches.  

 This is particularly clean for $R=\{1,n-1\}$ (hypersimplices) since the points $p_i={i\over n}$ are evenly spaced in $[0,1]$.  If $a\in[0,1]$ and $k\in\R$ write $a+_1k$ for $a+k$ mod-$1$ or the fractional part of the sum.  
 \begin{lemma} For every $a_1,\ldots ,a_n\in[0,1]$ there is $$\sum_{k=0}^{k=n-1}V_{[n]}(F(j(a_i+_1{k\over n})))=1.$$
\end{lemma}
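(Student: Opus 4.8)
The plan is to read the left side off from the random-walk procedure described after Lemma 3.6, run simultaneously on all $n$ cyclic shifts, and then to couple those $n$ walks into a single process on the circle $\R/\Z$. By Lemma 3.6 and the discussion following it, for any configuration $b.\in[0,1]^n$ one has $V_{[n]}(F(j(b.)))=V\cdot P(b.)$, where $V$ is the common value of $V_{[n]}(F(\cdot))$ on the successful terminal configurations — the permutations of the $n$-element multiset $\{q_1,\dots,q_n\}$ of Lemma 4.2, which for $R=\{1,n-1\}$ is $\{p_1,p_2,\dots,p_{n-1},p_{n-1}\}$ — and $P(b.)$ is the probability that the $n$-particle symmetric walk started from $b.$, absorbing at the nodes $p_t=t/n$ by the rule forced by Lemma 4.1, reaches such a state rather than sending a particle to an endpoint. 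Hence $\sum_{k=0}^{n-1}V_{[n]}(F(j(a_i+_1k/n)))=V\sum_{k=0}^{n-1}P(a_i+_1k/n)$, and it remains to find $V$ and $\sum_kP(a_i+_1k/n)$. The constants $\frac{|H|}{\binom{n}{r}}$ in the definition of $Y$ are arranged so that $V=1$; the quickest check is the specialization $a_1=\dots=a_n$, where the identity collapses to the partition of unity $\sum_{k\in\Z}B_n(h-k)\equiv1$ for the degree $n-1$ uniform B-spline $B_n$, which up to normalization is the hypersimplex-slice volume $h\mapsto\hbox{vol}_{n-1}\{x\in[0,1]^n:\sum_ix_i=h\}$.

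For $\sum_kP(a_i+_1k/n)$ I would couple the $n$ shifted walks. The nodes $p_t=t/n$ form a set invariant under rotation of $\R/\Z$ by $1/n$, and $F\circ j$ descends to $\R/\Z$ since $F(j(0))=F(j(1))=\{0\}$; rotating by $-k/n$ therefore identifies the interval walk from the $k$-shifted configuration with the walk from $a.$ on $\R/\Z$ in which the lone ``degenerate'' point of the circle has been carried from $p_0$ to $p_{-k}$. So run a single $n$-particle symmetric random walk on $\R/\Z$ from $a.$, let $p_0,\dots,p_{n-1}$ swallow particles by the common rule, and observe that almost surely all $n$ particles end up at nodes. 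The claim to prove is that along almost every such run there is exactly one $k\in\Z/n$ for which, once $p_{-k}$ is instead declared forbidden (so the particle that would have landed there escapes to an endpoint), the remaining particles still occupy the surviving nodes with the multiplicities of $\{q_1,\dots,q_n\}$ — equivalently, for which this run records the successful interval walk of shift $k$. Granting this, $\sum_k\mathbf{1}[\hbox{shift }k\hbox{ succeeds}]\equiv1$, so $\sum_kP(a_i+_1k/n)=1$ on taking expectations, and with $V=1$ the lemma follows.

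The hard part will be that last claim — the bookkeeping of the coupled run: one must track the growing partial matching of particles to nodes, follow the single ``overflow'' particle (there being, on the interval, one more particle than interior nodes), and verify that deleting exactly one node yields an achievable interval terminal configuration for exactly one shift, compatibly with the multiplicities ($r$ particles at $p_r$, one at each interior node, the rest at $p_s$). A cleaner, purely measure-theoretic alternative I would pursue in parallel is to use Lemma 3.6 to show that $\Phi(a.):=\sum_kV_{[n]}(F(j(a_i+_1k/n)))$ is unchanged when any single $a_i$ is replaced by a coupled symmetric walk on the arc between two consecutive nodes — each such perturbation of each shift is an irrelevant generator, since $j$ is affine, so the conditional mean of the moved coordinate is preserved — then drive every coordinate of every shift onto a node and identify the resulting finite weighted sum of mixed volumes with the mixed-volume refinement of the tiling $[0,1]^{n-1}=\bigsqcup_{t=1}^{n-1}\{x\in[0,1]^{n-1}:t-1\le\sum_ix_i\le t\}$, whose pieces have total volume $1$.
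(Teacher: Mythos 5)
The paper states this lemma with no proof at all, so there is no argument of the authors' to compare yours against; I will judge the proposal on its own. Your central idea --- rotate the $n$ shifted interval walks onto a single walk on $\R/\Z$ whose absorbing set $\{0,\frac1n,\dots,\frac{n-1}n\}$ is invariant under the shift, with the one ``deadly'' point $-\frac kn$ (the common image of the two endpoints, where $F(j(0))=F(j(1))$ is a point) moving with $k$ --- is the right one and is surely how the lemma is meant to be proved. But the step you defer as ``the hard part'' fails in the form you set it up, because you have miscounted the particles. Reading Lemma 4.2's $\{q_t\}_{t\in[n]}$ literally, you run $n$ particles toward the multiset $\{p_1,\dots,p_{n-1},p_{n-1}\}$. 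In fact there are $n-1$ particles and each interior node absorbs exactly one: the paper's own description says the number of particles equals the number of possible interior absorptions, which for $r=1$, $s=n-1$ is $r+(s-r-1)+(n-s)=n-1$; the terminal configurations are the permutations of $\{p_r\}^r\times\prod_{t=r+1}^{s-1}\{p_t\}\times\{p_s\}^{n-s}=\{p_1,\dots,p_{n-1}\}$; Example 1 uses three particles and $V_{[3]}$ for $n=4$; and since $\dim X=n-1$, a mixed volume of $n$ of these polytopes would vanish identically, so the lemma (and Lemma 4.2) must be read with $n-1$ points. With the correct count your hard part is immediate and needs no bookkeeping: on the circle there are $n$ nodes each of capacity one and only $n-1$ particles, so in the all-absorbing coupled run every particle is a.s.\ absorbed, each at a distinct node, and exactly one node $\frac{j_0}n$ is never visited; shift $k$ succeeds iff no particle ever reaches $-\frac kn$, i.e.\ iff $j_0\equiv-k\pmod n$; hence exactly one shift succeeds almost surely and $\sum_kP(a_\cdot+_1\frac kn)=1$. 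With your count the node capacities are not rotation-invariant (the node $p_{n-1}-\frac kn$ would have capacity two, a different node for each $k$) and there is no single leftover node, so the ``exactly one $k$'' claim is not merely unproved but not even true of that process.

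The second problem is the normalization $V=1$, which is false and whose proposed justification does not work. Example 1 computes $V=32\sqrt2$ for $n=4$, and the instances of this very lemma listed there sum to $V$, not to $1$; so the ``$=1$'' in the statement has to be read as $\sum_kV_{[n-1]}(F(j(a_\cdot+_1\frac kn)))=V$, equivalently $\sum_kP(a_\cdot+_1\frac kn)=1$. Your B-spline check controls only the combinatorial profile of the slice volumes, not the metric normalization coming from the dilation built into $Y$, the $\sqrt n$ of the slicing, and the embedding of $X$ into $\R^{2n}$; specializing the lemma to equal $a_i$ gives $\sum_k\hbox{Vol}_{n-1}(F(j(a+\frac kn)))=V$, which is $32\sqrt2$ for $n=4$, not $1$. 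None of this is needed: once the coupling is set up correctly, $V$ cancels and the identity to be proved is exactly $\sum_kP=1$. (Your ``measure-theoretic alternative'' is the same argument in disguise --- driving every coordinate of every shift onto a node by irrelevant perturbations is legitimate by Lemma 3.6, but identifying the resulting atomic measure requires precisely the one-leftover-node count above.)
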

\vskip10pt
\noindent {\bf Example 1:}(three dimensional hypersimplices)
\newline Set $n=4$, $R=\{1,3\}$ so $H=\{\{1\},\ldots ,\{4\},\{1,2,3\},\ldots,\{2,3,4\}\}$, $X$ is a copy of $\R^4$ spanned by $x_1=(-{3\over 4},{1\over 4},{1\over 4},{1\over 4},-{1\over 4},-{1\over 4},-{1\over 4},{3\over 4})$, $x_2=({1\over 4},-{3\over 4},{1\over 4},{1\over 4},-{1\over 4},-{1\over 4},{3\over 4},-{1\over 4})$, $x_3=({1\over 4},{1\over 4},-{3\over 4},{1\over 4},-{1\over 4},{3\over 4},-{1\over 4},-{1\over 4})$, $x_4=({1\over 4},{1\over 4},{1\over 4},-{3\over 4},{3\over 4},-{1\over 4},-{1\over 4},-{1\over 4})$, in $\R^8$ while $Y$ is the convex hull of $y_1=(2,2,2,2,0,0,0,0)$ and $y_3=(0,0,0,0,2,2,2,2)$ in $\R^8$ with the linear homeomorphism \newline $j(i)=(2i,2i,2i,2i,2(1-i),2(1-i),2(1-i),2(1-i))$ of $[0,1]$ to $Y$.  
Consider the cases in which every $4u_i=4j^{-1}(y_i)$ is an integer with $1\leq 4u_i\leq 4u_{i+1}\leq 3$ and write $y_.=j((u_1,u_2,u_3))$.  There are ten such $y_.$.  
Note that $F(j(0))$ and $F(j(1))$ are points and hence any mixed volume involving either one is zero, $F(j({1\over 4}))$ and $F(j({3\over 4}))$ are (dual) tetrahedra and $F(j({2\over 4}))$ is an octahedron.
A direct computation gives $V=32\sqrt{2}$.  
The Lemma 4.2 gives: 
\newline $V_{[3]}(j(({1\over 4},{1\over 4},{1\over 4})))=V{1\over 1}{1\over 2}{1\over 3}=V{1\over 6}$
\newline $V_{[3]}(j(({1\over 4},{1\over 4},{2\over 4})))=V{1\over 1}{1\over 2}{2\over 3}=V{2\over 6}$
\newline $V_{[3]}(j(({1\over 4},{1\over 4},{3\over 4})))=V{1\over 1}{1\over 2}{3\over 3}=V{3\over 6}$
\newline $V_{[3]}(j(({1\over 4},{2\over 4},{2\over 4})))=V{1\over 1}{2\over 2}{2\over 3}=V{4\over 6}$
\newline $V_{[3]}(j(({1\over 4},{2\over 4},{3\over 4})))=V{1\over 1}{2\over 2}{3\over 3}=V{6\over 6}$ (as per the normalization) 
\newline and switching $i$ with $4-i$:
\newline $V_{[3]}(j(({3\over 4},{3\over 4},{3\over 4})))=V{1\over 1}{1\over 2}{1\over 3}=V{1\over 6}$
\newline $V_{[3]}(j(({2\over 4},{3\over 4},{3\over 4})))=V{1\over 1}{1\over 2}{2\over 3}=V{2\over 6}$
\newline $V_{[3]}(j(({1\over 4},{3\over 4},{3\over 4})))=V{1\over 1}{1\over 2}{3\over 3}=V{3\over 6}$
\newline $V_{[3]}(j(({2\over 4},{2\over 4},{3\over 4})))=V{1\over 1}{2\over 2}{2\over 3}=V{4\over 6}$
\newline $V_{[3]}(j(({1\over 4},{2\over 4},{3\over 4})))=V{1\over 1}{2\over 2}{3\over 3}=V{6\over 6}$ (again)
\newline while the only one not fitting Lemma 4.2 is:
\newline $V_{[3]}(j(({2\over 4},{2\over 4},{2\over 4})))=V{1\over 1}{2\over 2}{2\over 3}=V{4\over 6}$.

Lemma 4.3 gives:
\newline $0+V_{[3]}(j(({1\over 4},{1\over 4},{1\over 4})))+V_{[3]}(j(({2\over 4},{2\over 4},{2\over 4})))+V_{[3]}(j(({3\over 4},{3\over 4},{3\over 4})))=V,$
\newline $0+V_{[3]}(j(({1\over 4},{1\over 4},{2\over 4})))+V_{[3]}(j(({2\over 4},{2\over 4},{3\over 4})))+0=V,$
\newline $0+V_{[3]}(j(({1\over 4},{1\over 4},{3\over 4})))+0+V_{[3]}(j(({1\over 4},{3\over 4},{3\over 4})))=V,$
\newline $0+V_{[3]}(j(({1\over 4},{2\over 4},{2\over 4})))+V_{[3]}(j(({2\over 4},{3\over 4},{3\over 4})))+0=V$ and 
\newline $0+V_{[3]}(j(({1\over 4},{2\over 4},{3\over 4})))+0+0=V.$

\vskip10pt
\noindent {\bf Example 2:}(three dimensional nonhypersimplices)
\newline Set $n=4$ and $r=\{1,2\}$ so that $|H|=4+6$ and the transitions occur at 
$p_1={1\over 2}$ which absorbs one particle and $p_2={3\over 4}$ which absorbs two with $F(j(p_1))$ again a simplex and $F(j(p_2))$ a cube.  

\section{questions}
\vskip10pt
{\bf Guess:} If $R=\{1,n-1\}$ and every $nv_i\in[0,n]$ is integral then ${(n-1)!\over V}V_{[n-1]}(j(v.))$ is congruent modulo $n$ to $\pm n^n\prod_iv_i$.

This guess is clearly true for the cases in Lemma 4.2.  For th this only leaves $\pm2\cdot2\cdot2$ congruent to $4$ mod $4$ to check.   
  
\vskip10pt
{\bf Question:} Is there a polynomial time algorithm for mixed volumes of hypersimplices?

\bibliographystyle{amsplain}

\begin{thebibliography}{10}

\bibitem{mv}
http://www.encyclopediaofmath.org/index.php?title=Mixed-volume\_theory

\bibitem{c}
Dorian Critoru, \emph{Mixed Volumes of Hypersimplices, Root Systems and Shifted Young Tableaux}, Department of Mathematics, MIT in partial fulfillment of the requirements of the degree of PhD

\end{thebibliography}

\def\cprime{$'$} \def\cprime{$'$}
\providecommand{\bysame}{\leavevmode\hbox to3em{\hrulefill}\thinspace}
\providecommand{\MR}{\relax\ifhmode\unskip\space\fi MR }
\providecommand{\MRhref}[2]{%
  \href{http://www.ams.org/mathscinet-getitem?mr=#1}{#2}
}
\providecommand{\href}[2]{#2}

\vskip20pt
\noindent {\bf Authors:} Eric Babson and Einar Steingrimsson

\end{document}